\newtheorem{theorem}{Theorem}[section]
\newtheorem{observation}[theorem]{Observation}
\newtheorem{proposition}[theorem]{Proposition}
\newtheorem*{thm*}{Theorem}
\theoremstyle{remark}
\theoremstyle{definition}
\newtheorem{definition}[theorem]{Definition}
\newtheorem{problem}{Problem}
    \newlength{\circlabelwidth}
    \setlist{nosep,before={\parskip=0pt plus 2pt},after={\parskip=0.5em plus 2pt}}
    \setlist[enumerate]{label=\textup{\arabic*.}}
    \newlist{subprob}{enumerate}{2}
        \setlist[subprob,1]{label={(\roman*)}}
        \setlist[subprob,2]{label={(\arabic*)}}
    \setlist[itemize]{labelindent=10pt,labelwidth=\circlabelwidth,leftmargin=!,label=$\circ$}
    \newlist{problems}{enumerate}{3}
        \setlist[problems,1]{before=\setupstar,label=\textup{\arabic*.}, itemsep=2pt, topsep=8pt,ref=\textup{\arabic*}}
        \setlist[problems,2]{before=\setupstar,label=(\alph*),parsep=0pt}
        \setlist[problems,3]{before=\setupstar,label=(\roman*),parsep=0pt}
\DeclareMathOperator{\conv}{conv}
\newcommand*{\R}{\mathbb{R}}
\newcommand*{\Z}{\mathbb{Z}}
\title{Hollow polytopes with many vertices}
\author{Srinivas Arun}
\author{Travis Dillon}
\address{Srinivas Arun, Massachusetts Institute of Technology, Cambridge, MA 02139, USA}
\email{sarun@mit.edu}
\address{Travis Dillon, Massachusetts Institute of Technology, Cambridge, MA 02139, USA}
\email{travis.dillon@mit.edu}
\begin{document}

\begin{abstract}
Given a set $S \subseteq \R^d$, a \emph{hollow polytope} has vertices in $S$ but contains no other point of $S$ in its interior. We prove upper and lower bounds on the maximum number of vertices of hollow polytopes whose facets are simplices or whose vertices are in general position. We also obtain relatively tight asymptotic bounds for polytopes which do not contain lattice segments of large length.
\end{abstract}

\maketitle

\section{Introduction}

A subset $T$ of a point set $S \subseteq \R^d$ is called \emph{empty in $S$} if every point in $T$ is a vertex of $\conv(T)$ and no other point of $S$ is contained in $\conv(T)$; in short, $\conv(T)\cap S =\operatorname{vert}\!\big(\conv(T)\!\big)$. Empty polytopes in a discrete set $S$ are strongly connected to the intersection properties of that set with convex bodies:

\begin{thm*}[Hoffman \cite{Hoffman1979}]
    Given a set $S \subseteq \R^d$, let $h(S)$ be the minimal positive integer, if any exists, such that the following \emph{Helly-type theorem} holds:
    \begin{quote}
    For any finite family $\mathcal{F}$  of convex sets in $\mathbb{R}^d$, if every $h(S)$ or fewer sets in $\mathcal{F}$ contain a point of $S$ in their intersection, then $\bigcap \mathcal F$ contains a point in $S.$
    \end{quote}
    Then $h(S)$ equals the maximum number of vertices of an empty subset of $S.$
\end{thm*}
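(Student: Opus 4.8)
The plan is to establish the two inequalities $h(S)\ge v(S)$ and $h(S)\le v(S)$ separately, writing $v(S)$ for the largest number of vertices of a subset of $S$ that is empty in $S$ (allowing $v(S)=\infty$).

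For $h(S)\ge v(S)$, I would exhibit a family witnessing that the Helly-type statement fails at parameter $v(S)-1$. Take a set $T=\{p_1,\dots,p_t\}$ that is empty in $S$ and has $t=v(S)$ vertices, and let $\mathcal F=\{\conv(T\setminus\{p_i\}):1\le i\le t\}$. Every subfamily of $\mathcal F$ of size at most $t-1$ omits some member $\conv(T\setminus\{p_i\})$, and then $p_i$ --- being one of the points generating $\conv(T\setminus\{p_k\})$ for every $k\ne i$ --- lies in all the remaining members and belongs to $S$; so every $t-1$ or fewer members of $\mathcal F$ share a point of $S$. But $\bigcap\mathcal F\subseteq\conv(T)$, so $\bigcap\mathcal F\cap S\subseteq\conv(T)\cap S=\{p_1,\dots,p_t\}$ because $T$ is empty, while $p_j\notin\conv(T\setminus\{p_j\})$ since $p_j$ is a vertex of $\conv(T)$; hence $\bigcap\mathcal F\cap S=\emptyset$. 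Thus the Helly-type statement fails at parameter $t-1$, which forces $h(S)\ge t=v(S)$; if $v(S)=\infty$ this shows that no finite parameter works.

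For $h(S)\le v(S)$, it is enough to verify the Helly-type statement at parameter $v(S)$, i.e.\ that every finite family $\mathcal F$ of convex sets with $\bigcap\mathcal F\cap S=\emptyset$ has a subfamily of size at most $v(S)$ whose intersection avoids $S$. Pass to a \emph{minimal} such subfamily $\mathcal G=\{G_1,\dots,G_m\}$. Minimality yields, for each $j$, a point $x_j\in\bigl(\bigcap_{i\ne j}G_i\bigr)\cap S$, and $x_j\notin G_j$ (else $x_j\in\bigcap\mathcal G\cap S$). Hence the $x_j$ are distinct, and since $\conv\{x_i:i\ne j\}\subseteq G_j$ while $x_j\notin G_j$, no $x_j$ lies in the convex hull of the others, so $P:=\conv\{x_1,\dots,x_m\}$ is a polytope with exactly $m$ vertices. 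If $P$ is empty in $S$ we are done, since then $v(S)\ge m$. Observe also that $\bigcap_j\conv\{x_i:i\ne j\}\subseteq\bigcap_j G_j$, so this ``polytopal'' family is itself bad: its common intersection avoids $S$.

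The crux is to force $P$ to be empty in $S$, and I would do this by an exchange argument. Suppose $y\in S\cap P$ is not a vertex of $P$, and write $y$ as a convex combination of the $x_j$ with minimal support $J$, so $\lvert J\rvert\ge 2$. Since $y\notin\bigcap_j\conv\{x_i:i\ne j\}$ by the observation above, there is some $j$ with $y\notin\conv\{x_i:i\ne j\}$; any such $j$ must lie in $J$ (otherwise $y$ would be a convex combination of $\{x_i:i\ne j\}$), so set $i:=j\in J$. Replacing $x_i$ by $y$ then produces an $m$-vertex polytope contained in $P$ that still has every $x_k$ with $k\ne i$ as a vertex but no longer contains $x_i$, hence meets $S$ in strictly fewer points. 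Iterating --- legitimate when $S$ is locally finite, so that $P\cap S$ is finite --- drives the number of $S$-points in the hull down to $m$, so the final configuration is empty in $S$ and $v(S)\ge m$. I expect the genuinely delicate point to be verifying that each exchanged configuration still satisfies the property needed to continue (that its associated polytopal intersection avoids $S$); this is the technical heart of the argument, and it is also where a hypothesis on $S$ such as local finiteness really enters, since for dense sets like $S=\mathbb Q\subseteq\R$ one has $h(S)>v(S)$.
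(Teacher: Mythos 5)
The paper does not actually prove this statement---it is quoted from Hoffman's 1979 paper and used as motivation---so there is no internal proof to compare against; I can only assess your argument on its own terms. Your first direction, $h(S)\ge v(S)$, is correct and is the standard nerve-type construction: the family $\{\conv(T\setminus\{p_i\})\}$ does witness failure of the Helly property at $v(S)-1$. Your reduction in the second direction to a minimal subfamily $\mathcal G$, the choice of witnesses $x_j$, and the observation that all $m$ witnesses are vertices of $P=\conv\{x_1,\dots,x_m\}$ are also fine (as is your remark that some discreteness hypothesis is genuinely needed: indeed $h(\mathbb Q)=2>1=v(\mathbb Q)$, so the statement as quoted is implicitly about discrete $S$ such as $\Z^d$).

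The gap is exactly at the point you flag, and it is not a routine verification: the invariant needed to iterate the exchange (that the new configuration's ``core'' $\bigcap_{t\in T'}\conv(T'\setminus\{t\})$ avoids $S$) can fail, and with arbitrary choices your process can dead-end at a non-empty configuration. Concretely, take $S=\{A,B,C,D,y,z\}\subset\R^2$ with $A=(0,0)$, $B=(2,0)$, $C=(2,2)$, $D=(0,2)$, $y=(1,\tfrac15)$, $z=(\tfrac97,\tfrac57)$, and let $\mathcal G=\{\conv(X\setminus\{x\}):x\in X\}$ for $X=\{A,B,C,D\}$; this family is minimal with $\bigcap\mathcal G\cap S=\{(1,1)\}\cap S=\emptyset$, with witnesses $X$ itself, so $m=4$. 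The point $y\in S\cap\conv(X)$ lies outside both $\conv(X\setminus\{A\})$ and $\conv(X\setminus\{B\})$, so your rule permits exchanging $A$ for $y$, giving $X'=\{y,B,C,D\}$. But $z$ is the intersection of the diagonals $[y,C]$ and $[B,D]$ of $\conv(X')$, hence $z\in S\cap\conv(X')$ lies in $\conv(X'\setminus\{t\})$ for \emph{every} $t\in X'$: no further exchange preserving four vertices exists, and in fact $\conv(X')$ contains no $4$-vertex empty subset of $S$ at all (the empty set $\{A,D,y,z\}$ exists, but only by backtracking outside $\conv(X')$). So the iteration as described is not merely delicate---it is false without a more careful, non-greedy selection of the witness configuration (this selection is the real content of Hoffman's proof), and your proposal does not supply it.
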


For instance, any empty subset of $\mathbb{Z}^d$ has at most $2^d$ vertices: If $T \subseteq \Z^d$ has at least $2^d + 1$ points, then two of them, say $x$ and $y$, have the same parity in every coordinate, so $\frac{1}{2}(x+y)$ is an integer point in $\conv(T)$ that is not a vertex. On the other hand, $T = \{0,1\}^d$ is empty in $\Z^d$. So $h(\Z^d) = 2^d$, which proves the following theorem, (first proved by Doignon \cite{Doignon} and later, independently, by Bell \cite{Bell1977} and Scarf \cite{Scarf1977}):

\begin{thm*}
    Let $\mathcal F$ be a finite family of convex sets in $\R^d$. If the intersection of any $2^d$ or fewer sets in $\mathcal F$ contains an integer point, then $\bigcap \mathcal F$ does, too.
\end{thm*}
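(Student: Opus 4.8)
The plan is to read this off from Hoffman's theorem. Applied to $S=\Z^d$, that theorem says that $h(\Z^d)$ — the smallest integer $k$ for which ``any $k$ of the convex sets sharing a common integer point forces the whole family to share one'' — coincides with the largest possible number of vertices of a subset of $\Z^d$ that is empty in $\Z^d$. So it suffices to show that this largest number is $2^d$; in fact, for the theorem exactly as stated only the inequality $h(\Z^d)\le 2^d$ is needed, while the matching lower bound shows the constant cannot be improved.

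For the upper bound I would argue that no $T\subseteq\Z^d$ with $|T|\ge 2^d+1$ is empty in $\Z^d$. There are exactly $2^d$ residue classes modulo $2$, so by pigeonhole some two distinct points $x,y\in T$ agree in parity in every coordinate. Then $m=\frac12(x+y)\in\Z^d$ and, by convexity, $m\in\conv(T)$; but $m$ is the midpoint of a segment joining two distinct points of $\conv(T)$, so it is not a vertex of $\conv(T)$. Whether or not $m$ happens to lie in $T$, this contradicts emptiness: either $m$ is a point of $T$ that fails to be a vertex of $\conv(T)$, or $m$ is a point of $\Z^d\setminus T$ lying in $\conv(T)$, so in either case $\conv(T)\cap S\neq\operatorname{vert}(\conv(T))$. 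Hence every empty subset of $\Z^d$ has at most $2^d$ vertices.

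For the lower bound, which shows $2^d$ is sharp, I would exhibit the explicit empty set $T=\{0,1\}^d$: its convex hull is the cube $[0,1]^d$, whose only integer points are its $2^d$ vertices, which are exactly the points of $T$. Combining the two bounds gives $h(\Z^d)=2^d$, and substituting this value into Hoffman's theorem yields the claimed Helly-type statement. There is no genuine obstacle here; the only point requiring a moment's care is verifying that the midpoint argument rules out emptiness in both of the cases above, and that is routine.
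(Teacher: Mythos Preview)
Your proposal is correct and follows essentially the same route as the paper: invoke Hoffman's theorem with $S=\Z^d$, use the parity pigeonhole and midpoint argument for the upper bound, and exhibit $\{0,1\}^d$ for sharpness. The only difference is that you spell out the two cases for the midpoint more carefully than the paper does.
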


A subset $T$ of a point set $S\subseteq\mathbb{R}^n$ is called \emph{hollow in $S$} if the \emph{interior} of $\conv(T)$ contains no points of $S$. There is a large literature on so-called \emph{hollow} polytopes, much of which is focused on classification of hollow polytopes up to unimodular equivalence. There is also a strong connection between hollow polytopes and the celebrated Flatness Theorem (stated as  \Cref{thm:flatness} in this paper). See \cite{iglesias2019hollow} for a recent survey of these aspects.

In this paper, we seek bounds on the number of vertices of hollow polytopes, in essence importing a central question about empty polytopes to the world of hollow polytopes.

Since many of our results are asymptotic, we will use standard order of magnitude notation: for two functions $f(k,d)$ and $g(k,d)$, we write $f = O(g)$ to denote that there is an absolute constant $c > 0$ such that $f(k,d) \leq c\, g(k,d)$ for all $k$ and $d$. Similarly, the notation $f = O_d(g)$ indicates that there is a positive function $c(d)$ such that $f(k,d)\leq c(d)\, g(k,d)$ for all $k$ and $d.$. We write $f = \Theta_d(g)$ if $f = O_d(g)$ and $g = O_d(f)$. We will also say that a set in $\mathbb{R}^d$ is in ``general position'' if no $d+1$ points of the set are contained in a hyperplane.

The main results of the paper are as follows.

\begin{theorem}\label{hollowthm}
    If $P$ is a hollow polytope in $\Z^d$ in which every facet is a simplex, then $P$ has at most $2^d$ vertices. Furthermore, if the vertices of $P$ are in general position, then $P$ has $O\big( d^2 (\log d)^3 \big)$ vertices.
\end{theorem}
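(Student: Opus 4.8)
The plan is to treat the two assertions by quite different means: the $2^d$ bound comes from a parity/pigeonhole argument that exploits simpliciality, while the polynomial bound for vertex sets in general position comes from slicing $P$ by the lattice hyperplanes furnished by the Flatness Theorem.

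\smallskip
\textbf{The $2^d$ bound.} First I would suppose $P\subseteq\Z^d$ is hollow and simplicial with more than $2^d$ vertices and try to derive a contradiction. By pigeonhole, two distinct vertices $x,y$ have the same parity in every coordinate, so $m:=\tfrac12(x+y)\in\Z^d$; since $m\in[x,y]\subseteq P$ and $P$ is hollow, $m\in\partial P$, so $m$ lies in the relative interior of a unique proper face $G$ of $P$. Taking a supporting functional $\langle u,\cdot\rangle\le b$ that is tight exactly on $G$ and evaluating at $m$ forces $\langle u,x\rangle=\langle u,y\rangle=b$, hence $x,y\in G$; as $P$ is simplicial, $G$ is a simplex containing $[x,y]$ as a face, and since $m$ lies in the relative interiors of both $G$ and of $[x,y]$ we conclude $G=[x,y]$. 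So every ``parity collision'' $\{x,y\}$ spans an edge of $P$ whose midpoint is a non-vertex lattice point in the relative interior of that edge. The remaining step—and the crux of this half—is to upgrade the mere existence of such a configuration into a quantitative contradiction with having more than $2^d$ vertices. The device I would use: for any vertex $v$ sharing the parity of $m$, the point $\tfrac12(v+m)\in P$ is a lattice point, and it lies in $\operatorname{int}(P)$ (contradiction) unless $v$ lies on a facet containing the edge $[x,y]$. Thus each over-full parity class forces a different parity class to be ``trapped'' on the small family of facets around one of its edges, and I would organize this—using the simplicial link of an edge to control those facets—into an injective charging of the surplus vertices to empty parity classes, yielding the bound $2^d$.

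\smallskip
\textbf{General position.} Here the hypothesis already makes every facet a simplex: a facet lies in a hyperplane, which by general position meets at most $d$ vertices of $P$, and a $(d-1)$-polytope with at most $d$ vertices is a simplex. Assume $\dim P=d$ (otherwise $P$ lies in a hyperplane and has at most $d$ vertices, again by general position). Since $P$ is hollow, the Flatness Theorem (\Cref{thm:flatness}) gives a nonzero $u\in\Z^d$ with $\max_{P}\langle u,\cdot\rangle-\min_{P}\langle u,\cdot\rangle\le w(d)$, where $w(d)$ denotes the best polynomial bound on the flatness constant. Every vertex $v$ of $P$ has $\langle u,v\rangle\in\Z$, so all vertices of $P$ lie on the at most $w(d)+1$ hyperplanes $\{\langle u,\cdot\rangle=j\}$, $j\in\Z$; by general position each such hyperplane carries at most $d$ vertices, so $P$ has at most $d\,(w(d)+1)$ vertices. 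Since the flatness constant satisfies $w(d)=O\!\big(d(\log d)^{3}\big)$, this is $O\!\big(d^{2}(\log d)^{3}\big)$.

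\smallskip
\textbf{Where the difficulty lies.} The general-position bound is essentially immediate once the ``$\le d$ vertices per lattice hyperplane'' observation is combined with flatness; the only real care is in citing a flatness bound of the correct polynomial order. The genuine obstacle is the $2^d$ bound: the parity/supporting-hyperplane preliminaries are routine, but turning ``some parity class has two vertices'' into a contradiction at the precise threshold $2^d$ requires a delicate global count that must use simpliciality (through the facet structure around an edge) in an essential way.
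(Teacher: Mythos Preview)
Your general-position argument is essentially the paper's: apply the Flatness Theorem to get a direction $u$ and note that each lattice hyperplane orthogonal to $u$ carries at most $d$ vertices. One small repair: the Flatness Theorem as stated applies to bodies containing \emph{no} lattice points, whereas a hollow $P$ has lattice points on its boundary. The paper handles this by first passing to the half-scale copy $\tfrac12(P-c)+c$ about an interior point $c$; this body lies in $\operatorname{int}(P)$ and is therefore lattice-point-free, so flatness applies, and $P$ meets at most a constant factor more lattice hyperplanes than it does---absorbed into the $O(\cdot)$. Your version goes through after the same standard adjustment.

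The $2^d$ bound is where your proposal has a genuine gap. The preliminary steps are fine---a parity collision between vertices $x,y$ does force $[x,y]$ to be an edge of $P$ with a lattice midpoint---but you then appeal to an ``injective charging of surplus vertices to empty parity classes'' without actually specifying one, and you yourself flag this as the crux. I do not see how to complete it: the link of an edge in a simplicial $d$-polytope can have many vertices, so the facets through $[x,y]$ do not obviously constrain the parity class of $m$ tightly enough to force some other class to be empty. The paper avoids this difficulty by a completely different, and much shorter, device: it reduces to Doignon's theorem. If a hollow simplicial $P$ has a non-vertex lattice point $x$ on its boundary, let $y$ be a vertex of the minimal face of $P$ containing $x$ and set $Q=\conv\big(\{x\}\cup\operatorname{vert}(P)\setminus\{y\}\big)$; then $Q$ is again a hollow simplicial lattice polytope with the same number of vertices but strictly fewer non-vertex boundary lattice points. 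Iterating yields an \emph{empty} lattice polytope with the same vertex count, and Doignon's $2^d$ bound applies directly. This vertex-replacement reduction is the idea your proposal is missing.
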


As we describe in \Cref{sec:interior-lattice}, to obtain any bound on the number of vertices, some restriction (such as simplicial facets or vertices general position) must be imposed; these two conditions seem fairly reasonable.

In \Cref{sec:lattice-segments}, we relax the emptiness condition in a different way and prove bounds on the number of vertices of polytopes that contain no ``lattice segments'' of large length.

\begin{theorem}\label{segthm}
    If $P$ is a convex lattice polytope in $\R^d$ which does not contain a set of the form $\{x, x+y, x+2y, \dots, x+ky\}$ with $x,y \in \Z^d$ and $y\neq 0$, then $P$ has $O_d(k^{d-1})$ vertices.
\end{theorem}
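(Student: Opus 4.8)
The plan is to induct on the dimension $d$, reducing the problem to a statement about the \emph{lattice width} of $P$. First I would reformulate the hypothesis. Since $P$ is convex, for any two lattice points $p,q\in P$ the whole segment $\conv\{p,q\}$ lies in $P$, so $P$ contains a $(k+1)$-term progression $\{x,x+y,\dots,x+ky\}$ with $y\in\Z^d\setminus\{0\}$ if and only if some line meets $P$ in at least $k+1$ lattice points. Thus the hypothesis says precisely that every line contains at most $k$ lattice points of $P$; write $m(P)$ for the maximum number of lattice points of $P$ on a single line, so $m(P)\le k$, and it suffices to bound the vertex count in terms of $m(P)$.

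The heart of the argument is the bound $\mathrm w(P)=O_d(k)$, where $\mathrm w(P)=\min_{c\in\Z^d\setminus\{0\}}\big(\max_{x\in P}c\cdot x-\min_{x\in P}c\cdot x\big)$ is the lattice width. I would deduce this from two facts. First, pigeonhole gives $|P\cap\Z^d|\le m(P)^d$: if $|P\cap\Z^d|>t^d$ then two lattice points of $P$ agree modulo $t$, and the segment joining them contains $t+1$ collinear lattice points of $P$, whence $m(P)\ge t+1$. Second, a lattice polytope of lattice width at least $W$ contains at least $c_d W^d$ lattice points — a quantitative ``reverse flatness'' statement. To see it, pass to the John ellipsoid $E\subseteq P$, so that $\mathrm w(E)\ge \mathrm w(P)/d$; writing $E=a+AB^d$ with $B^d$ the unit ball, the lattice $A^{\mathsf T}\Z^d$ has first minimum $\gtrsim_d W$, and Minkowski's theorems together with lattice transference force the dual lattice $A^{-1}\Z^d$ to have covering radius $\lesssim_d 1/W$ and covolume $\lesssim_d W^{-d}$; hence the unit ball contains $\gtrsim_d W^d$ points of $A^{-1}\Z^d$, equivalently $E$ contains $\gtrsim_d W^d$ points of $\Z^d$. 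Taking $W=\mathrm w(P)$ and combining with $|P\cap\Z^d|\le m(P)^d\le k^d$ gives $c_d\,\mathrm w(P)^d\le k^d$, i.e.\ $\mathrm w(P)\le C_d k$.

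With the width bound in hand the induction is routine. After a unimodular transformation, which changes neither the vertex count nor $m(P)$, I may assume the minimizing direction is $e_d$, so that $P\subseteq\{0\le x_d\le C_d k\}$ after translating. For each integer $i\in[0,C_d k]$ set $Q_i=\conv\big(P\cap\{x_d=i\}\cap\Z^{d-1}\big)$, a lattice polytope of dimension at most $d-1$ with $m(Q_i)\le m(P)\le k$. Every vertex $v$ of $P$ is a lattice point lying in the slice $\{x_d=v_d\}$, hence in $Q_{v_d}\subseteq P$; and if $H$ is a supporting hyperplane with $H\cap P=\{v\}$, then $H\cap Q_{v_d}=\{v\}$, so $v$ is a vertex of $Q_{v_d}$. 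Distinct vertices of $P$ in one slice give distinct vertices of the corresponding $Q_i$, so the number of vertices of $P$ is at most $\sum_{i=0}^{\lfloor C_d k\rfloor}(\text{number of vertices of }Q_i)$. By the inductive hypothesis each summand is $O_{d-1}(k^{d-2})$, and there are $O_d(k)$ of them, giving $O_d(k^{d-1})$ vertices. The base case $d=1$ is immediate, a one-dimensional lattice polytope being a segment with at most two vertices.

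The main obstacle is the lattice-width estimate $\mathrm w(P)=O_d(k)$; the remainder is bookkeeping. Both halves of that estimate are standard in spirit, but making the dependence on $k$ exactly linear — rather than, say, $k\log k$ — is essential, since an extra logarithmic factor at each of the $d-1$ inductive levels would spoil the exponent. If a clean linear bound proves awkward to extract from the transference estimates, a fallback is to prove directly that a lattice polytope with no lattice segment of length $k$ has bounded lattice width by exhibiting a long lattice segment whenever some coordinate slab is too thick.
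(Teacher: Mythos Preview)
Your argument is correct and follows the same inductive skeleton as the paper's: bound the lattice width of $P$ by $O_d(k)$, slice $P$ by $O_d(k)$ parallel lattice hyperplanes, and apply the $(d{-}1)$-dimensional bound on each slice. The difference lies entirely in how the width bound $w(P)=O_d(k)$ is obtained. The paper observes that $P$ can contain at most one point of $k\Z^d$ (two such points would be joined by a lattice segment of length at least $k$), translates so that $P\cap(2k)\Z^d=\emptyset$, and then applies the Flatness Theorem as a black box to $\tfrac{1}{2k}P$, giving $w(P)\le 2c_d k$ in one line. Your route instead bounds $|P\cap\Z^d|\le k^d$ by pigeonhole and establishes a reverse-flatness inequality $|P\cap\Z^d|\ge c_d\,w(P)^d$ via the John ellipsoid and lattice transference. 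Both yield the needed linear-in-$k$ width bound, but your reverse-flatness lemma is essentially a from-scratch proof of a quantitative Flatness Theorem, so the paper's version is shorter while yours is more self-contained. As a minor point, your handling of the slices through $Q_i=\conv\bigl(P\cap\{x_d=i\}\cap\Z^{d-1}\bigr)$ is slightly more careful than the paper's, which applies induction to ``each hyperplane'' without explicitly passing to the lattice hull of the slice.
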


We also construct a polytope with $\Theta_d(k^{(d-1)\frac{d}{d+1}})$ vertices containing no such set.

We conclude in \Cref{sec:problems} with several open problems.

\section{Interior lattice points}\label{sec:interior-lattice}

How many points of $\Z^d$ can a hollow polytope contain? As it turns out, arbitrarily many: Take, for example, a lattice rectangle $[0,1] \times [0,n]$. Even the number of vertices can be arbitrarily large:

\begin{observation}
    For any $d \geq 3$, there is convex polytope $\mathcal{P}$ in $\mathbb{Z}^d$ with arbitrarily many vertices such that the only lattice points in $\mathcal{P}$ are in its vertices, edges, or two-dimensional faces.
\end{observation}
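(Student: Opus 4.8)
The plan is a straightforward product construction. First I would fix a convex lattice polygon $Q \subseteq \R^2$ with an arbitrarily large number $n$ of vertices; for concreteness, $Q = \conv\{(i,i^2) : 0 \le i \le n-1\}$ works, since those points lie on a parabola and are therefore in strictly convex position, so $Q$ has exactly $n$ vertices. Then I would take
\[
    \mathcal{P} = Q \times [0,1]^{d-2} \subseteq \R^d,
\]
a full-dimensional lattice polytope (this is where $d \ge 3$ is used, so that the cube factor has positive dimension). Its vertex set is $\operatorname{vert}(Q) \times \{0,1\}^{d-2}$, of size $2^{d-2} n$, which is as large as we like.

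The only thing that needs checking is that every lattice point of $\mathcal{P}$ lies on a face of dimension at most $2$. First, $\mathcal{P}$ is hollow: $\operatorname{int}(\mathcal{P}) = \operatorname{int}(Q) \times (0,1)^{d-2}$ contains no lattice point because $(0,1) \cap \Z = \varnothing$. For the sharper statement I would invoke the standard description of the face lattice of a product — the faces of $A \times B$ are exactly the products $F \times G$ of a face $F$ of $A$ with a face $G$ of $B$. A lattice point of $\mathcal{P}$ has the form $(q,v)$ with $q \in Q \cap \Z^2$ and $v \in \{0,1\}^{d-2}$, and the unique face of $\mathcal{P}$ containing $(q,v)$ in its relative interior is $F_q \times \{v\}$, where $F_q$ is the unique face of $Q$ with $q$ in its relative interior; since $\dim(F_q \times \{v\}) = \dim F_q \le \dim Q = 2$, this face has dimension at most $2$, as required.

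I do not expect any genuine obstacle here: the construction is explicit, and it rests only on two classical facts — that convex lattice polygons with arbitrarily many vertices exist, and that the face lattice of a product of polytopes is the product of the face lattices. (Alternatively one could simply take the $3$-dimensional prism $Q \times [0,1]$ embedded in a coordinate $3$-flat of $\R^d$; multiplying by the full cube is only to keep $\mathcal{P}$ full-dimensional.)
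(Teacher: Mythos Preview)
Your proof is correct; the product $\mathcal{P}=Q\times[0,1]^{d-2}$ does the job, and your verification via the face lattice of a product is clean.

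The paper takes a different route: instead of a product, it forms the iterated pyramid $\conv(x_1,\dots,x_n,e_3,\dots,e_d)$, joining a planar lattice $n$-gon in $\Z^2\times\{0\}^{d-2}$ with the standard basis vectors $e_3,\dots,e_d$. The lattice-point check there (left implicit in the paper) is that any lattice point of this polytope has its last $d-2$ coordinates nonnegative with sum at most $1$, hence each of those coordinates lies in $\{0,1\}$; so the point is either one of the apices $e_j$ (a vertex) or lies in the base polygon (a $2$-face). Your construction produces $2^{d-2}n$ vertices versus the paper's $n+d-2$; both are unbounded in $n$, so the discrepancy is irrelevant for the observation. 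Your argument is a bit more self-contained in that you actually carry out the verification, while the paper's is terser but asks the reader to fill in why the pyramid has no stray lattice points above the base.
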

\begin{proof}
    Let $x_1,\dots,x_n$ be the vertices of a convex $n$-gon in $\Z^2 \subset \Z^d$, and let $e_i$ be the $i$th standard basis vector. Then $\conv(x_1,\dots,x_n, e_3,\dots,e_d)$ is a polytope with $n+d-2$ vertices.
\end{proof}

In order to obtain meaningful upper bounds, we therefore must impose some further restrictions. To that end, let $\operatorname{hol}_{\textup{gp}}(\Z^d)$ denote the maximum number of vertices of a hollow lattice polytope whose vertices are in general position. It is not obvious that $\operatorname{hol}_{\textup{gp}}(\Z^d)$ is finite, either; while the general position condition is restrictive, we still have the freedom of allowing many integer points on the boundary of the convex hull.

In fact, $\operatorname{hol}_{\textup{gp}}(\Z^d)$ is finite, and one crucial fact implying this is that a polytope whose vertex set is in general position has simplices as facets. Therefore, we also define $\operatorname{hol}_\triangle(\Z^d)$ as the maximum number of vertices in a hollow simplicial lattice polytope. Since any hollow lattice polytope with vertices in general position is simplicial, $\operatorname{hol}_{\textup{gp}}(\Z^d) \leq \operatorname{hol}_\triangle(\Z^d)$. Our first result is that $\operatorname{hol}_\triangle(\Z^d)$ is finite.

\begin{theorem}\label{edgeDoi}
    $\operatorname{hol}_\triangle(\Z^d)\leq 2^d$.
\end{theorem}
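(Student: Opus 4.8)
The plan is to push the classical parity argument (Doignon--Bell--Scarf) as far as it will go, use simpliciality to pin down where the offending midpoint lands, and then remove the remaining case by dropping a dimension.

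\medskip

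\noindent\emph{Steps 1--2: parity pigeonhole and simpliciality.} Suppose for contradiction that $P\subseteq\R^d$ is a hollow lattice polytope with simplicial facets and with $n\ge 2^d+1$ vertices. Reducing the vertices modulo $2$ and applying pigeonhole to $(\Z/2\Z)^d$ produces vertices $u\ne v$ with $u\equiv v\pmod 2$, so $m:=\tfrac12(u+v)\in P\cap\Z^d$; since $P$ is hollow, $m\in\partial P$. Now pick a supporting hyperplane $H$ of $P$ through $m$ and set $F:=P\cap H$. Because $u,v\in P$ and $m=\tfrac12u+\tfrac12v\in H$, both $u$ and $v$ lie in $F$. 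Every proper face of $P$ is a simplex (it lies in a facet, and facets are simplices), and in the simplex $F$ the point $\tfrac12u+\tfrac12v$ has a unique representation as a convex combination of vertices, so it lies in the relative interior of the edge $\overline{uv}$ of $F$. Thus $\overline{uv}$ is an edge of $P$ whose relative interior contains the lattice point $m$. (Contrapositively: if $\overline{uv}$ were not an edge, then $u$ and $v$ would lie on no common proper face, hence $\operatorname{relint}\overline{uv}\subseteq\operatorname{int}P$, and $m$ would already be an interior lattice point of $P$ --- impossible.)

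\medskip

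\noindent\emph{Step 3: dimension reduction.} At this point nothing is yet contradicted --- $m$ has landed on $\partial P$ rather than in $\operatorname{int}P$, so the empty-polytope argument stops here --- and the idea is to trade the edge $\overline{uv}$, together with its interior lattice point, for a drop in dimension that costs only a factor of $2$ in the vertex count. Write $v-u=2w$ with $0\ne w\in\Z^d$, let $e$ be the primitive vector with $w\in\Z_{>0}\,e$, and let $\pi\colon\R^d\to\R^d/\R e\cong\R^{d-1}$ be the quotient, which carries $\Z^d$ onto a copy of $\Z^{d-1}$. No three vertices of a polytope are collinear, so $\pi$ is at most two--to--one on $V(P)$. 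The plan is to show that $Q:=\pi(P)$ is again a hollow polytope with simplicial facets and that every vertex of $P$ maps to a vertex of $Q$; then $\lceil n/2\rceil\le|V(Q)|\le\operatorname{hol}_\triangle(\Z^{d-1})$, and induction on $d$ --- with base case $d=1$, where a lattice segment without interior lattice point has exactly two vertices --- gives $n\le 2^d$, the desired contradiction. The useful extra structure for this step is that the fibre of $\pi$ over $\pi(u)=\pi(v)$ is exactly the edge $\overline{uv}$, since $\overline{uv}=P\cap\operatorname{aff}\overline{uv}$ (a face is cut out by an affine flat through it), and its relative interior contains $m$ and meets $\operatorname{int}P$ in nothing, which forces $\pi(u)\in\partial Q$.

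\medskip

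\noindent I expect Step 3 to be the main obstacle, and I would not be surprised if it has to be implemented by a somewhat different mechanism. Projecting along an edge direction does \emph{not} in general preserve hollowness --- a lattice point of $\operatorname{int}Q$ can be the image of a lattice point lying on $\partial P$; for instance, projecting $\conv(0,2e_1,2e_2)$ along $e_1$ gives $[0,2]$, which is not hollow --- and the images of the facets of $P$ only subdivide $Q$, rather than forming its facets, so simpliciality of $Q$ is not automatic either. The delicate point is to use the position of $m$ inside the edge (and the fact that $\pi(u)\in\partial Q$) to rule these pathologies out, or else to argue directly that once $n>2^d$ one can always slide some boundary lattice point of $P$ into $\operatorname{int}P$. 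A natural fallback, if the projection resists, is to iterate the parity argument \emph{inside} a facet $F'\supseteq\overline{uv}$: it is a $(d-1)$-simplex carrying the lattice point $m$ in the relative interior of one of its edges, and one keeps track of how many vertices of $P$ lie on $F'$ versus off it.
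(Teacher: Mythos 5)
Your Steps 1--2 are fine: the parity pigeonhole plus simpliciality correctly shows that a hollow simplicial lattice polytope with more than $2^d$ vertices has an edge $\overline{uv}$ whose relative interior contains a lattice point $m=\tfrac12(u+v)$. But this alone is no contradiction --- hollow simplicial polytopes with such edges exist (your own example $\conv(0,2e_1,2e_2)$ is one) --- so the entire burden of the proof rests on Step 3, and Step 3 is not carried out. As you yourself observe, projecting along the edge direction need not preserve hollowness (your example shows exactly this failure), need not preserve simpliciality, and need not carry vertices of $P$ to vertices of $Q$; none of these is repaired, and the proposed ``fallback'' of iterating inside a facet is only a sketch with no argument attached. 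So as it stands the proposal is a genuine gap: the key mechanism by which the extra lattice point on the boundary is turned into a contradiction is missing, and the specific mechanism you propose fails in general.

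The paper resolves exactly this difficulty by a different and much lighter move: rather than dropping a dimension, it trades the offending boundary lattice point for a vertex. If $x$ is a non-vertex lattice point on $\partial P$, let $y$ be a vertex of the minimal-dimensional face of $P$ containing $x$, and pass to $Q=\conv\big(\{x\}\cup\operatorname{vert}(P)\setminus\{y\}\big)$. Then $Q\subseteq P$ is again hollow and simplicial, has the same number of vertices, and has strictly fewer non-vertex lattice points on its boundary; induction on $\lvert P\cap\Z^d\rvert$ therefore produces an \emph{empty} lattice polytope with the same number of vertices as $P$, at which point Doignon's theorem gives the bound $2^d$ directly. This sidesteps every issue you flagged, since no lattice projection (and hence no preservation of hollowness or simpliciality under projection) is ever needed; if you want to rescue your outline, replacing Step 3 by such a vertex-swapping reduction to the empty case is the natural repair.
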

\begin{proof}
    We will show that for any hollow simplicial lattice polytope, we can construct an empty lattice polytope with the same number of vertices.

    Assume that $P$ is a simplicial lattice polytope whose boundary contains a lattice point $x$ that is not a vertex. Let $y$ be a vertex of the minimal-dimension face of $P$ that contains $x$, and let $Q$ be the polytope whose vertex set is $\{x\}\cup \operatorname{vert}(P)\setminus\{y\}$. Then $Q$ is a simplicial lattice polytope with strictly fewer non-vertex lattice points on its boundary and the same number of vertices as $P$. By induction on $|P \cap \Z^d|$, there is an empty polytope with the same number of vertices as $P$, so Doignon's theorem implies that $\lvert\operatorname{vert}(P)\rvert\leq 2^d$.
\end{proof}

The bound in Theorem~\ref{edgeDoi} is not necessarily sharp for $d > 2$, but we unfortunately do not have a better bound for simplicial polytopes. For polytopes whose vertices are in general position, however, we can obtain a much better bound using the concept of lattice width.

\begin{definition}
    The \emph{width} of a convex body $K \subseteq \R^d$ in the direction of a vector $v \in \R^d$ is 
    \[
        w_v(K) = \sup_{x \in K} \langle x,v\rangle - \inf_{x \in K} \langle x,v\rangle.
    \]
    The \emph{lattice width} of a convex body $K$ is 
    \[
        w_L(K) = \min_{v \in \Z^d \setminus \{0\}} w_v(K).
    \]
\end{definition}

The \emph{lattice hyperplanes} orthogonal to an integer vector $v$ are those hyperplanes orthogonal to $v$ that intersect $\Z^d$. Two adjacent lattice hyperplanes orthogonal to $v$ are distance $1/\|v\|$ apart. So if $w_v(K) = \alpha$, at most $\alpha+1$ lattice hyperplanes orthogonal to $v$ intersect $k$, and if $w_L(K) = \alpha$, then there is a lattice vector $v$ such that at most $\alpha + 1$ lattice hyperplanes orthogonal to $v$ intersect $K$.

In 1948, Khinchine \cite{khinchine} proved the fundamental fact that any convex body containing no lattice points has bounded lattice width. The minimal upper bound on this lattice width is called the \emph{flatness constant}. A series of works improved Khinchine's upper bound; Kannan and Lov\'asz in 1988 provided an elegant proof that the flatness constant in $\R^d$ is at most $d^2$ \cite{kannan-lovasz-covering}. On the other hand, the flatness constant is at least $d$: The interior of the lattice simplex $\conv(0, d e_1, \dots, d e_d)$ is a convex set with lattice width $d$ containing no lattice points. Recently, Reis and Rothvoss obtained an upper bound that is near optimal:

\begin{theorem}[Flatness theorem, Reis--Rothvoss \cite{flatness-thm}]\label{thm:flatness}
    If $K$ is a convex body in $\R^d$ that contains no lattice points, $w_L(K) = O\big(d (\log d)^3\big)$.
\end{theorem}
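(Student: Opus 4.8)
The statement is the flatness theorem of Reis and Rothvoss in its sharp form $w_L(K) = O\big(d(\log d)^3\big)$ for a lattice-free convex body $K \subseteq \R^d$, and the natural plan is to pass from lattice width to covering radii and then prove the Kannan--Lov\'asz ``subspace flatness'' estimate with only polylogarithmic loss. Write $\mu(\Lambda, K)$ for the covering radius of a symmetric convex body $K$ with respect to a lattice $\Lambda$, the least $s>0$ with $sK+\Lambda = \R^d$, and $\mu_i(\Lambda,K)$ for the $i$th covering minimum (the least dilate of $K$ whose lattice translates meet every affine subspace of dimension $d-i$). The first move is to invoke the Kannan--Lov\'asz machinery already cited in the text: they introduced these covering minima and expressed the lattice width of a lattice-free body through them, which is how they obtained the bound $O(d^2)$. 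Improving the exponent amounts to showing that the controlling covering quantity, once the unavoidable linear factor in $d$ (forced by the simplex $\conv(0, de_1, \dots, de_d)$) is stripped away, is governed up to $O\big((\log d)^3\big)$ by a single lower-dimensional obstruction --- the densest sublattice $\Lambda \cap V$ over subspaces $V$. This subspace-flatness statement is what the whole theorem reduces to.

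To prove such a covering estimate I would translate it into the language of Gaussian mass and the smoothing parameter of the lattice, which is the setting in which polylogarithmic bounds are available. For an ellipsoid this is immediate, so I would first reduce a general symmetric body $K$ to an ellipsoid, paying only controllable factors, and pass between the norm of $K$ and the Euclidean norm through Banaszczyk's transference theorem, which relates $\mu(\Lambda, K)$ to the dual successive minima $\lambda_i(\Lambda^*, K^\circ)$. The obstacle is that transference alone loses a polynomial factor in $d$: it gives $\mu(\Lambda,K)\cdot \lambda_1(\Lambda^*, K^\circ) = O(d)$ rather than the polylogarithmic loss demanded once the worst subspace has been removed. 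Overcoming this is the crux of the argument.

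The new ingredient that makes polylogarithmic loss possible is the Reverse Minkowski theorem of Regev and Stephens-Davidowitz, which bounds the number of short lattice vectors --- equivalently the Gaussian mass of $\Lambda$ --- purely in terms of the determinants of its sublattices. Feeding this into the covering estimate lets one show that whenever no low-dimensional sublattice $\Lambda \cap V$ is abnormally dense, the Gaussian measure spreads out enough to cover $\R^d$ at scale $O\big((\log d)^3\big)$. I would organise this through the canonical stability filtration of the lattice, peeling off at each stage the densest sublattice and recursing on the quotient; each layer contributes one logarithmic factor, and bounding both the number of layers and the per-layer loss produces the cube. I expect the main difficulty to lie precisely here: proving that a single subspace obstruction controls the covering radius up to polylog, which requires the Reverse Minkowski inequality to convert determinantal data into Gaussian mass, together with a careful chaining over the filtration and the reduction from an arbitrary body to the ellipsoidal setting. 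Composing this subspace-flatness estimate with the Kannan--Lov\'asz reduction of the first paragraph then delivers $w_L(K) = O\big(d(\log d)^3\big)$.
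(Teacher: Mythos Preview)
The paper does not prove this statement at all: it is quoted as an external result of Reis and Rothvoss \cite{flatness-thm} and used as a black box in the proof of \Cref{holgp}. There is therefore no proof in the paper to compare your proposal against.

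For what it is worth, your sketch is a reasonable high-level outline of the actual Reis--Rothvoss argument --- reduction to subspace flatness via the Kannan--Lov\'asz covering-minima framework, then the Regev--Stephens-Davidowitz Reverse Minkowski inequality to control Gaussian mass and hence the covering radius with only polylogarithmic loss. But none of this appears in, or is needed for, the present paper; here the theorem is simply cited.
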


We can apply this result to obtain an improved upper bound for $\operatorname{hol}_{\textup{gp}}(\Z^d)$.

\begin{theorem}\label{holgp}
    $\operatorname{hol}_{\textup{gp}}(\Z^d) = O\big(d^2(\log d)^3\big)$.
\end{theorem}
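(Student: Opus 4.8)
The plan is to feed the Flatness theorem (\Cref{thm:flatness}) into the general position hypothesis. Let $P$ be a hollow lattice polytope in $\Z^d$ with vertices in general position; we want $\lvert\operatorname{vert}(P)\rvert = O\big(d^2(\log d)^3\big)$. First I would dispose of the degenerate case: if $P$ is not full-dimensional, then $P$ lies in a hyperplane of $\R^d$, so general position immediately forces $\lvert\operatorname{vert}(P)\rvert \leq d$, and we are done. Hence we may assume $\dim P = d$.

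Since $P$ is hollow, its interior $\operatorname{int}(P)$ is a bounded, full-dimensional convex body containing no lattice points, so \Cref{thm:flatness} gives $w_L(\operatorname{int} P) = O\big(d(\log d)^3\big)$. Because $\operatorname{int}(P)$ is dense in $P$, every linear functional $\langle\,\cdot\,,v\rangle$ has the same supremum and infimum over $\operatorname{int}(P)$ as over $P$, so $w_L(P) = w_L(\operatorname{int} P) = O\big(d(\log d)^3\big)$. Fix an integer vector $v \neq 0$ achieving $w_v(P) = w_L(P)$; as noted after the definition of lattice width, consecutive lattice hyperplanes orthogonal to $v$ lie at distance $1/\lVert v\rVert$, so $P$ meets at most $w_L(P) + 1$ of them.

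Finally I would count vertices hyperplane by hyperplane: every vertex of $P$ is a lattice point, hence lies on one of these at most $w_L(P)+1$ lattice hyperplanes orthogonal to $v$. Each such hyperplane is a hyperplane of $\R^d$, so the general position hypothesis allows it to contain at most $d$ vertices of $P$. Summing over the hyperplanes yields $\lvert\operatorname{vert}(P)\rvert \leq d\,\big(w_L(P)+1\big) = O\big(d^2(\log d)^3\big)$. There is no real obstacle in this argument; the only points requiring a little care are the reduction to the full-dimensional case and the routine observation that replacing $P$ by its interior does not change the lattice width.
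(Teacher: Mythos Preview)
Your proposal is correct and follows essentially the same route as the paper: apply the Flatness theorem to the hollow interior to find a direction $v$ in which $P$ meets only $O\big(d(\log d)^3\big)$ lattice hyperplanes, then use general position to cap the vertex count on each hyperplane by $d$. The only cosmetic difference is that the paper applies \Cref{thm:flatness} to the compact body $\tfrac{1}{2}(\conv(X)-c)+c$ (for some $c\in\conv(X)$) rather than to $\operatorname{int}(P)$ directly, which sidesteps the ``convex body versus open set'' technicality you noted.
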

\begin{proof}
    Let $X$ be a set of integer points in $\Z^d$ in general position such that $\conv(X)$ is hollow and $\operatorname{vert}\!\big(\!\conv(X)\big) \cap \Z^d = X$. Let $c \in \conv(X)$. Then $K:= \frac{1}{2}(\conv(X)-c) + c$ is a convex body that contains no lattice points. Using \Cref{thm:flatness}, we obtain a vector $v$ such that $K$ intersects at most $O(d(\log d)^3)$ lattice hyperplanes orthogonal to $v$; therefore $\conv(X)$ intersects at most $O(d (\log d)^3)$ lattice hyperplanes orthogonal to $v$, as well. Each lattice hyperplane contains at most $d$ points of $X$, so $X$ can contain no more than $O(d^2 (\log d)^3)$ points.
\end{proof}
Theorem~\ref{edgeDoi} and Theorem~\ref{holgp} together imply Theorem~\ref{hollowthm}.

As for lower bounds, the lattice simplex $\conv(0,e_1,\dots,e_d)$ shows that $\operatorname{hol}_\triangle(\Z^d) \geq \operatorname{hol}_{\textup{gp}}(\Z^d) \geq d+1$. A simple construction slightly improves this lower bound.

\begin{proposition}
    $\operatorname{hol}_\triangle(\Z^d)\geq 2d$.
\end{proposition}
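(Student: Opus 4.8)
The plan is to exhibit an explicit hollow simplicial lattice polytope with $2d$ vertices; it will turn out to be combinatorially a cross-polytope. Let $\Delta = \conv(0, e_2, \dots, e_d)$ be the standard $(d-1)$-simplex, regarded as lying in the hyperplane $\{x_1 = 0\}$ of $\R^d$, and set
\[
    P \;=\; \conv\bigl(\Delta \cup (e_1 - \Delta)\bigr) \;=\; \conv\bigl(\{0, e_2, \dots, e_d\} \cup \{e_1, e_1 - e_2, \dots, e_1 - e_d\}\bigr),
\]
so $P$ is the convex hull of $\Delta$ together with its point reflection $e_1 - \Delta$, which lies in the parallel hyperplane $\{x_1 = 1\}$.

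Hollowness and the vertex count are immediate. Every listed point has first coordinate $0$ or $1$, so $P \subseteq \{0 \le x_1 \le 1\}$ and $\operatorname{int}(P) \subseteq \{0 < x_1 < 1\}$ contains no lattice point; and $\{x_1 = 0\} \cap P = \Delta$ and $\{x_1 = 1\} \cap P = e_1 - \Delta$ are faces of $P$, each a $(d-1)$-simplex, so all $2d$ listed points are vertices of $P$ and there are no others.

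The substance is to show $P$ is simplicial, which I would do by classifying the facets of $P$ through their outer normals $c = (c_1, \dots, c_d)$. If $c_2 = \dots = c_d = 0$ we recover the two ``horizontal'' facets $\Delta$ and $e_1 - \Delta$, each a $(d-1)$-simplex. Otherwise the facet lies in neither hyperplane $\{x_1 = 0\}$, $\{x_1 = 1\}$, so it has vertices from both; evaluating $\langle c, \cdot \rangle$ on the $2d$ vertices and using that a facet has at least $d$ vertices, one finds that a facet occurs exactly when all nonzero entries among $c_2, \dots, c_d$ share a sign and each of $c_2, \dots, c_d$ equals either $0$ or that common extreme value. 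Its vertex set is then $\{0\} \cup \{e_j : c_j = 0\} \cup \{e_1 - e_j : c_j \ne 0\}$ --- or the image of this set under the involution $x \mapsto e_1 - x$, a symmetry of $P$ --- which is exactly $d$ vertices, one from each of the $d$ pairs $\{0, e_1\}$, $\{e_2, e_1 - e_2\}, \dots, \{e_d, e_1 - e_d\}$; a short linear-independence check shows they are affinely independent. Hence every facet is a simplex (and in fact $P$ is combinatorially the $d$-cross-polytope, its $2^d$ facets indexed by transversals of those pairs), which proves the proposition.

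The one delicate point is the facet classification: one must verify that a supporting hyperplane whose normal has a nonzero entry among $c_2, \dots, c_d$ defines an actual $(d-1)$-dimensional facet only in the cases described --- a priori such a hyperplane could pass through more vertices than the facets do --- after which everything is routine bookkeeping.
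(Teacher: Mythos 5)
Your construction is correct, but it takes a somewhat different route from the paper's. The paper uses the set $X=\{e_i\}_{i=1}^d\cup\{\mathbf 1-e_i\}_{i=1}^d$ inside the cube $[0,1]^d$, so hollowness is immediate from the cube, and simpliciality comes in one line: $X$ is symmetric about $\tfrac12\mathbf 1$, hence $\conv(X)$ is an invertible affine image of the standard cross-polytope. You instead take the convex hull of a unimodular $(d-1)$-simplex and its point reflection into the adjacent lattice hyperplane, get hollowness from the slab $0\le x_1\le 1$, and prove simpliciality by classifying facet normals by hand. Your classification is in fact correct: if the entries $c_2,\dots,c_d$ have mixed signs, or some nonzero entry is not the extreme value, the supporting hyperplane meets at most $d-1$ of the $2d$ vertices, too few for a facet, and in the remaining cases the $d$ incident vertices (one from each pair $\{0,e_1\}$, $\{e_j,e_1-e_j\}$) are affinely independent. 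But the ``delicate point'' you flag can be bypassed entirely by applying the paper's trick to your own polytope: the vertices come in $d$ antipodal pairs about the center $\tfrac12 e_1$, and the direction vectors $\tfrac12 e_1,\ e_2-\tfrac12 e_1,\dots,e_d-\tfrac12 e_1$ are linearly independent (the relevant determinant is $\tfrac12$), so $P$ is an invertible affine image of the standard cross-polytope and hence simplicial, with no facet bookkeeping needed --- you note the cross-polytope structure only as a consequence, rather than using it as the proof. A small dividend of your example: it already gives $2d$ distinct vertices when $d=2$ (and even $d=1$), whereas the paper's set degenerates to two points in the plane, where the bound must instead be checked separately (e.g.\ via the unit square).
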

\begin{proof}
    We utilize an example from \cite[p. 11]{0-1-polytopes}. Let $\mathbf 1$ be the vector with $1$ in each coordinate. The set $X = \{e_i\}_{i=1}^d \cup \{\mathbf 1 - e_i\}_{i=1}^d$ lies in the unit cube $\{0,1\}^d$, so $\conv(X)$ is hollow. Moreover, $X$ is symmetric about the point $\frac{1}{2}\cdot \mathbf 1$, so there is an affine transformation taking $X$ to the standard cross-polytope; therefore $\conv(X)$ is simplicial.
\end{proof}





\section{Lattice segments}\label{sec:lattice-segments}

We now investigate polytopes which avoid certain point sets in their convex hull. The simplest possible example of this is prohibiting segments of a given length.

\begin{definition}
    We use $[x,y]$ to denote the line segment with endpoints $x$ and $y$. If $x,y \in \Z^d$ with $\big\lvert[x,y]\cap \Z^d\big\rvert = k$, we say that $[x,y]$ is a \emph{lattice segment of length} $k-1$.
\end{definition}

Notice that a single point has length $0$. We first consider the maximum number of lattice points a polytope can contain without containing a lattice segment of length $k$.

\begin{proposition}
    If $P$ is a convex lattice polytope in $\R^d$ and $P$ does not contain a lattice segment of length $k$, then $|P\cap \Z^d| \leq k^d$.
\end{proposition}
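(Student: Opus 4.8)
The plan is to prove the contrapositive by a pigeonhole argument on residues modulo $k$, exactly in the spirit of the computation $h(\Z^d)=2^d$ sketched in the introduction (with $2$ replaced by $k$). Suppose that $|P\cap\Z^d|\ge k^d+1$. Reduction modulo $k$ maps $P\cap\Z^d$ into the group $(\Z/k\Z)^d$, which has only $k^d$ elements, so two \emph{distinct} lattice points $a,b\in P$ satisfy $a\equiv b\pmod k$; write $b-a=kw$ with $w\in\Z^d\setminus\{0\}$.

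I would then use convexity to extract a forbidden segment. For each integer $i$ with $0\le i\le k$ the point $a+iw=(1-\tfrac ik)a+\tfrac ik b$ is a convex combination of $a$ and $b$, hence lies in $P$, and it is a lattice point; more generally, the segment $[a,b]\subseteq P$ contains $\gcd(kw)+1=k\cdot\gcd(w)+1\ge k+1$ equally spaced lattice points $p_0,\dots,p_m$ (here $\gcd(\cdot)$ denotes the gcd of the coordinates and $m=\gcd(kw)\ge k$). The sub-segment $[p_0,p_k]$ then contains exactly $k+1$ lattice points and lies in $P$, so it is a lattice segment of length $k$ inside $P$, contradicting the hypothesis. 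Equivalently, $\{a,a+w,\dots,a+kw\}\subseteq P$, which is precisely the configuration forbidden in \Cref{segthm}.

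There is no real obstacle here; the only small points requiring care are invoking convexity for the correct intermediate lattice point and passing from a lattice segment of length $\ge k$ to one of length exactly $k$ by taking a sub-segment. Finally, it is worth recording that the bound is sharp: the cube $\{0,1,\dots,k-1\}^d$ contains $k^d$ lattice points, and for any two of them $x,y$ each coordinate of $y-x$ has absolute value at most $k-1$, so $[x,y]$ has at most $\gcd(y-x)+1\le k$ lattice points, i.e.\ length at most $k-1$.
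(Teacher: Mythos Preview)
Your proof is correct and follows essentially the same approach as the paper's own proof: a pigeonhole argument on residue classes in $(\Z/k\Z)^d$ to find two lattice points $a,b\in P$ with $b-a=kw$, and then convexity to place the $k+1$ collinear lattice points $a,a+w,\dots,a+kw$ inside $P$. You are slightly more careful than the paper in explicitly passing from a segment of length $\ge k$ to one of length exactly $k$, and you also record the sharpness example, which the paper gives immediately after the proof.
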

\begin{proof}
    If $|P \cap \Z^d| > k^d + 1$, then there are two lattice points $x,y \in P \cap \Z^d$ whose coordinates are equal modulo $k$. If we set $z = \frac{1}{k}(y-x) \in \Z^d$, then $x, x+z, x+2z \dots, x+k z = y$ is a set of $k+1$ colinear lattice points contained inside $P$; so $P$ contains a lattice segment of length at least $k$.
\end{proof}

Since the hypercube $[1,k]^d$ contains exactly $k^d$ lattice points but no set of $k+1$ colinear lattice points, this inequality is tight. On the other hand, such lattice polytopes contain many fewer vertices. (This is \Cref{segthm}.)

\begin{proposition}
    Any convex lattice polytope in $\R^d$ that does not contain a lattice segment of length $k$ has $O_d(k^{d-1})$ vertices.
\end{proposition}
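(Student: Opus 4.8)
The plan is to induct on the dimension $d$, reducing a $d$-dimensional instance to lower-dimensional ones by slicing $P$ with parallel lattice hyperplanes. Write $f(d,k)$ for the maximum number of vertices of a lattice polytope in $\R^d$ that contains no lattice segment of length $k$. After an affine unimodular transformation we may assume $P$ is full-dimensional, so it suffices to bound $f(d,k)$; the base case $f(1,k)\le 2$ is clear, since a $1$-dimensional lattice polytope is a segment $[a,b]$ with $b-a\le k-1$. The engine of the induction is the geometric claim that
\[
    w_L(P)=O_d(k),
\]
which I will establish below; I carry out the slicing step first, assuming it.

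Let $v\in\Z^d\setminus\{0\}$ be a primitive vector realizing the lattice width, so $w_v(P)=w_L(P)=O_d(k)$. Every vertex $x$ of $P$ is a lattice point, hence $\langle x,v\rangle\in\Z$, and all of these integers lie in an interval of length $w_v(P)$, so the vertices of $P$ are spread among at most $w_L(P)+1=O_d(k)$ lattice hyperplanes $H_j=\{x:\langle x,v\rangle=j\}$. For each relevant $j$, let $Q_j=\conv\big(P\cap H_j\cap\Z^d\big)$. Then $Q_j$ is a lattice polytope of dimension at most $d-1$ whose affine hull carries a rank-$(d-1)$ lattice; it contains no lattice segment of length $k$, because all of its lattice points lie in $P$; and every vertex $x$ of $P$ with $x\in H_j$ is a vertex of $Q_j$, since $x$ is an extreme point of $P$ and therefore of every subset of $P$ that contains it. Applying the inductive hypothesis inside the affine hull of $Q_j$ (identified with $\R^{d-1}$, or a smaller $\R^{d'}$, by a unimodular map), $Q_j$ has at most $f(d-1,k)$ vertices. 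Summing over $j$ gives $f(d,k)\le\big(w_L(P)+1\big)\,f(d-1,k)=O_d(k)\cdot f(d-1,k)$, and iterating down to the base case yields $f(d,k)=O_d(k^{d-1})$, as desired.

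It remains to prove $w_L(P)=O_d(k)$. If $\operatorname{int}(P)$ contains no lattice point, the Flatness Theorem already gives $w_L(P)=O\big(d(\log d)^3\big)=O_d(k)$. Otherwise fix a lattice point $z_0\in\operatorname{int}(P)$ and consider the dilate $K:=\tfrac1k(P-z_0)$, which has the origin in its interior. The key observation is that $\operatorname{int}(K)\cap\Z^d=\{0\}$: if $z\ne0$ were an interior lattice point of $K$, then $z_0+kz\in\operatorname{int}(P)$, and since $z_0\in\operatorname{int}(P)$ as well, the whole segment from $z_0$ to $z_0+kz$ lies in $\operatorname{int}(P)$, so $\{z_0,z_0+z,z_0+2z,\dots,z_0+kz\}$ is a set of the forbidden form contained in $P$, contradicting the hypothesis. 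Thus it is enough to show that a convex body $K$ whose only interior lattice point is the origin has $w_L(K)=O_d(1)$ — for then $w_L(P)=k\,w_L(K)=O_d(k)$, since $w_L$ is translation-invariant and positively homogeneous. To see the boundedness of $w_L(K)$, observe that for the cut through the origin orthogonal to any $u\in\Z^d\setminus\{0\}$, both pieces $K\cap\{\langle x,u\rangle\ge0\}$ and $K\cap\{\langle x,u\rangle\le0\}$ have \emph{lattice-point-free interiors} — the origin lies on the cutting hyperplane, not in the interior of either piece — so the Flatness Theorem bounds the lattice width of each piece by $O\big(d(\log d)^3\big)$, and a short additional argument recombines this into $w_L(K)=O_d(1)$.

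The hard part is precisely this recombination — turning ``$P$ contains no lattice segment of length $k$'' into a bound on the lattice width of $P$. The delicate point is that a convex body can be wide in every lattice direction while being an extremely thin ``needle,'' so controlling $w_L(K)$ from the flatness of its two halves requires using convexity carefully (a lattice polytope that is a needle pointing in a badly-approximable direction is forced to be so flat that its own lattice width collapses again). Alternatively, the statement that a convex body with a bounded number of interior lattice points has lattice width bounded by a function of $d$ alone can be cited directly from the geometry of numbers, paralleling the way the Flatness Theorem is used to prove \Cref{holgp}. Everything downstream of this step — the slicing recursion and the dilation reduction — is routine.
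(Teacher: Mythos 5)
Your slicing induction is exactly the engine of the paper's proof and is carried out correctly (including the points the paper glosses over, such as passing to $Q_j=\conv(P\cap H_j\cap\Z^d)$ and noting that vertices of $P$ on $H_j$ remain vertices of $Q_j$). The problem is the step you yourself flag as ``the hard part'': the claim that a convex body $K$ whose only interior lattice point is the origin satisfies $w_L(K)=O_d(1)$ is asserted but not proved. The cut-into-two-halves sketch does not close it: each half $K\cap\{\langle x,u\rangle\ge 0\}$ is indeed hollow and hence flat in \emph{some} lattice direction, but the two halves may be flat in different directions, and flatness of $K^+$ in $u_+$ says nothing about $w_{u_+}(K^-)$, so no bound on $w_L(K)$ in any single direction follows; the promised ``short additional argument'' is precisely the missing content. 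The statement is true and known (it is a flatness-type theorem for bodies with a bounded number of interior lattice points), but as written your proof either rests on an unproved lemma or on an unspecified citation, so there is a genuine gap at the central step.

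The paper avoids this issue entirely with a cheaper reduction that you could substitute verbatim for your width bound: if $P$ contained two points of $k\Z^d$, say $x$ and $y=x+kz$ with $z\in\Z^d\setminus\{0\}$, then $x,x+z,\dots,x+kz\in P$ is already a forbidden lattice segment (no interior points needed), so $|P\cap k\Z^d|\le 1$; after translating by a suitable vector in $k\Z^d$ one gets $P\cap(2k)\Z^d=\emptyset$, i.e.\ $\frac{1}{2k}P$ contains no lattice points at all, and the Flatness Theorem applied to $\frac{1}{2k}P$ gives $w_L(P)\le 2c_dk$ directly. With that substitution your argument becomes complete and coincides with the paper's; note also that this sidesteps the minor mismatch in your first case, where you apply the paper's statement of the Flatness Theorem (for bodies with \emph{no} lattice points) to a body that is merely hollow.
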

\begin{proof}
    We proceed by induction. The theorem is true if $d=1$, since every polytope in $\R^1$ has at most 2 vertices. Suppose that $P$ is a convex lattice polytope in $\R^d$ with no lattice segment of length $k$. If $|P \cap k\Z^d| \geq 2$, then $P$ contains a lattice segment of length $k$. Thus $|P \cap k\Z^d| \leq 1$. By translating $P$ by an integer vector, we may assume that $P \cap (2k)\Z^d = \emptyset$.

    In this case, $\frac{1}{2k}P$ does not intersect the integer lattice. By the \hyperref[thm:flatness]{Flatness Theorem}, there is a constant $c_d$ so that $\frac{1}{2k}P$ has lattice width at most $c_d$; so $P$ has lattice width at most $2c_d k$. This means that there is an integer vector $v$ such that at most $2c_d k+1$ lattice hyperplanes orthogonal to $v$ intersect $P$. Every vertex of $P$ lies on one of these hyperplanes, and each hyperplane has at most $O_d(k^{d-2})$ vertices by induction. So $P$ has at most $O_d( k^{d-1} )$ vertices.
\end{proof}

On the other hand, there is a lattice polytope with almost the same number of vertices and no lattice segment of length $k$.

\begin{proposition}
    There is a convex lattice polytope in $\R^d$ with $\Theta_d(k^{(d-1) \frac{d}{d+1}})$ vertices and no lattice segment of length $k$.
\end{proposition}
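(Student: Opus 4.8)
The plan is to borrow the classical extremal construction of convex lattice polytopes with many vertices inside a box, and to observe that being confined to a sufficiently small box already forbids long lattice segments. By a theorem of Andrews (for the upper bound) and of B\'ar\'any--Larman (for the matching lower bound), for every positive integer $n$ there is a convex lattice polytope $P_n \subseteq [0,n]^d$ with $\Theta_d\!\big(n^{d(d-1)/(d+1)}\big)$ vertices, and no lattice polytope in $[0,n]^d$ has more. I would simply take $P = P_{k-1}$. Since $(k-1)^{d(d-1)/(d+1)} = \Theta_d\!\big(k^{d(d-1)/(d+1)}\big) = \Theta_d\!\big(k^{(d-1)\frac{d}{d+1}}\big)$, the polytope $P$ has exactly the claimed number of vertices.

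It then remains to check that $P$ contains no set $\{x, x+y, \dots, x+ky\}$ with $x,y \in \Z^d$ and $y \neq 0$; equivalently, that $P$ contains no lattice segment of length $k$. So suppose $[a,b] \subseteq P \subseteq [0,k-1]^d$ with $a,b \in \Z^d$ and $a \neq b$, and write $b - a = mv$ where $m \geq 1$ is an integer and $v \in \Z^d$ is primitive. Then the lattice points of $[a,b]$ are precisely $a, a+v, a+2v, \dots, a+mv$, so $[a,b]$ is a lattice segment of length $m$. Since $a$ and $b$ both lie in $[0,k-1]^d$, every coordinate of $b - a$ has absolute value at most $k-1$; and since $v$ is a nonzero integer vector, at least one coordinate of $v$ has absolute value at least $1$, so $m \leq \max_i |(b-a)_i| \leq k-1 < k$. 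Hence every lattice segment contained in $P$ has length strictly less than $k$, which is exactly what is needed (a set $\{x,x+y,\dots,x+ky\}$ would force a lattice segment of length at least $k$).

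There is no serious obstacle here: the one substantive ingredient is the existence half of the Andrews--B\'ar\'any--Larman estimate, and the rest is an elementary observation. The only point needing any care is the choice of box side length $k-1$ rather than $k$, which is precisely what keeps even an axis-parallel lattice segment down to length $k-1$. If one prefers to avoid invoking the general-dimensional lower bound, one can instead take $P = \conv\!\big(\Z^d \cap B\big)$ for a Euclidean ball $B$ of radius $\Theta_d(k)$ inscribed in $[0,k-1]^d$: the fact that this polytope has $\Theta_d\!\big(k^{d(d-1)/(d+1)}\big)$ vertices is the classical estimate for the number of vertices of the convex hull of the lattice points in a large ball, and the segment-avoidance argument above applies verbatim since $P$ still lies in $[0,k-1]^d$.
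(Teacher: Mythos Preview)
Your proposal is correct and is essentially the paper's argument: the paper takes $P = \conv\!\big(\tfrac{k-1}{2}B^d \cap \Z^d\big)$, invokes B\'ar\'any--Larman for the vertex count, and notes that the diameter bound rules out lattice segments of length $k$---which is exactly your ``alternative'' construction, and your primary box-based version is the same idea with $[0,k-1]^d$ in place of the inscribed ball. The only superfluous ingredient is the appeal to Andrews' upper bound, which plays no role since the proposition only asserts existence.
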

\begin{proof}
    Take $P = \conv\big(\frac{k-1}{2}B^d \cap \Z^d\big)$, where $B^d$ is the unit ball in $\R^d$. Since $P \subset \frac{k-1}{2}B^d$, it does not contain a lattice segment of length $k$. On the other hand, B\'ar\'any and Larman \cite{barany_integer-ball} proved that $P$ has $\Theta_d(k^{(d-1) \frac{d}{d+1}})$ vertices.
\end{proof}

\section{Open Problems}\label{sec:problems}
Although we obtain lower and upper bounds for $\operatorname{hol}_\triangle(\Z^d)$, they are far apart asymptotically: We only know that $2d \leq \operatorname{hol}_\triangle(\Z^d) \leq 2^d$. Is the growth rate of $\operatorname{hol}_\triangle(\Z^d)$ truly exponential, or is it much slower? There is an asymptotic difference in the bounds for $\operatorname{hol}_{\textup{gp}}(\Z^d)$, as well, though the gap is less dramatic.

\begin{problem}
    Improve the asymptotic bounds for $\operatorname{hol}_{\textup{gp}}(\Z^d)$ and $\operatorname{hol}_\triangle(\Z^d)$.
\end{problem}

Also, our bounds for the number of vertices of a lattice polytope in $\R^d$ that does not contain a lattice segment of length $k$, although close, do not quite match. We suspect that our lower or upper bound is tight, but make no conjecture which it is.

\begin{problem}
    Establish tight asymptotic bounds on the maximum number of vertices of a lattice polytope that does not contain a lattice segment of length $k$.
\end{problem}

And, of course, ``lattice segment of length $k$'' can be replaced by any family of point configurations, for a whole constellation of related problems.

\section{Acknowledgements}
This research was conducted under the auspices of the \scalebox{0.9}{MIT} \scalebox{0.9}{PRIMES}--\scalebox{0.9}{USA} program. Dillon was further supported by a National Science Foundation Graduate Research Fellowship under Grant No. 2141064.

\bibliographystyle{amsplain-nodash}
\bibliography{ref}

\end{document}